\newtheorem{lemma}{Lemma}[section]
\title{Uniform Penalty inversion of two-dimensional NMR Relaxation data}
\author{V. Bortolotti \thanks{ Department of Civil, Environmentals and Materials Engineering (DICAM), University of Bologna}
 \and  R. J. S. Brown\thanks{900  E  Harrison Ave, Apt B-9 Pomona  CA  91767-2024, USA}
 \and  P. Fantazzini\thanks{Department of Physics and Astronomy, University of Bologna; Museo Storico della Fisica e Centro di Studi e Ricerche Enrico Fermi, Piazza del Viminale 1, 00184 Roma – Italy}
 \and G. Landi\thanks{Department of Mathematics, University of Bologna}
 \and F. Zama\footnotemark[4]}%
\date{}
\theoremstyle{definition}
  \newtheorem{definition}{Definition}[section]
\begin{document}
\maketitle
\def\be{\mathbf{e}}
\def\bf{\mathbf{f}}
\def\bs{\mathbf{s}}
\def\bF{\mathbf{F}}
\def\bS{\mathbf{S}}
\def\bC{\mathbf{C}}
\def\bP{\mathbf{P}}
\def\bL2{\mathbf{L}}
\def\bK{\mathbf{K}}
\def\bLambda{\boldsymbol{\Lambda}}
\begin{abstract}
The inversion of two-dimensional NMR  data is an ill-posed problem related to the numerical computation of the inverse Laplace transform.
In this paper we present the 2DUPEN algorithm that extends the Uniform Penalty (UPEN) algorithm
[Borgia, Brown, Fantazzini, {\em Journal of Magnetic Resonance}, 1998] to two-dimensional data.
The  UPEN algorithm, defined for the inversion of one-dimensional NMR relaxation data,  uses Tikhonov-like regularization and
optionally non-negativity constraints in order to implement locally adapted regularization.
In this paper, we analyze the regularization properties of this approach.
Moreover, we extend the one-dimensional UPEN algorithm to the two-dimensional case and present an
efficient implementation based on the Newton Projection method.
Without any a-priori information on the noise norm, 2DUPEN
automatically computes the locally adapted regularization parameters and the distribution of the
unknown NMR parameters by using variable smoothing. Results of numerical experiments on simulated and real
 data are presented in order to illustrate the potential of the proposed method in
reconstructing peaks and flat regions with the same accuracy.
\end{abstract}
%
%#######################################################################################################
%NAME   :Intro_F.tex
%VERSION:1.01 [31/12/2015] (vb)
%
%DATE  : 21/12/2015
%
%NOTE  :
%#######################################################################################################
%
\section{Introduction}
%
%-Spatially Adapted Regularization
%-NMR works
%
%We present here a fast and efficient spatially adapted regularization algorithm to reconstruct images from two-dimensional
%Nuclear Magnetic Resonance (NMR) Relaxation data.

Nuclear Magnetic Resonance (NMR) relaxation of $^1$H nuclei measurements is an important tool to analyze the structure of porous media,
ranging from biological systems to hydrocarbon bearing sedimentary rocks \cite{DTH89,HCHB10,Galvosas2007497}.
NMR relaxometry, but also Magnetic Resonance Imaging, is characterized by two relaxation parameters, the longitudinal relaxation
time ($T_1$) and the
transverse relaxation time ($T_2$). When porous media saturated with water or other $^1$H containing fluid are analyzed, $T_1$ and $T_2$ show distributions of relaxation times.

The inversion of two-dimensional NMR relaxation data requires the solution of a first-kind Fredholm integral equation with
separable exponential kernel, occurring in two dimensional inverse Laplace transforms (ILT).
The properties and drawbacks of ILT are analyzed in \cite{IsVy99} and some progress on numeric ILT can be found in \cite{Brancik07, Kuhlman13},
studied only on test problems and not applied to real data.
The most common methods to deal with the  ill-posedness of this problem are the $L_2$ norm regularization methods.
In particular the Tikhonov regularization method has proven to be suitable to the problem of the inversion of NMR data whose solution,
representing a distribution of relaxation times, is usually positive and presents peaks of different heights over flat areas.
Therefore, starting around the year 2000,
%there is a huge amount of literature  dealing with
the Tikhonov regularization method is applied to reconstruct 2D maps from NMR data in several  papers \cite{VYH02, SVHFFS2002, Moody2004}.

It is well known that the main difficulty of the Tikhonov method is the estimation of the value of the regularization parameter.
Furthermore it is observed that a single regularization parameter does not allow one to reconstruct  peaks and flat regions with the same accuracy.
In this context, the papers \cite{Borgia1998, Borgia2000} propose an algorithm for 1D NMR inversion, based on Tikhonov
regularization with locally adapted regularization parameters and
optional nonnegative constraint.
Such papers state, for the first time, the {\em Uniform PENalty (UPEN) principle}
according to which the product of the  regularization parameter and the curvature value in
each point of the relaxation times distribution should be constant.  Following
the procedure proposed in  \cite{Borgia1998, Borgia2000} the local value of the regularization parameter is computed
combining an estimate  of the residual norm and the local curvature value.
This procedure  has been successfully used to invert 1D NMR data, and a commercial software ({\tt UpenWin} available at  \cite{UPwin}) is
currently used in 1D NMR inversion.

Locally adapted regularization has been proposed in the literature
for image denoising and deblurring by using  regularization terms related to the Total Variation function (see \cite{Gilboa2006,MGrasmair09}
and references therein). Moreover in \cite{HintIP2010,DongHint2011} the local regularization parameters are updated using
the  local filtered residual as estimator of the noise variance.
Recently, $L_1$ sparsity preserving regularization is applied to NMR data in \cite{CMRA2013}.
In that paper a scalar regularization parameter is used to solve
1D problems with low-resolution NMR data with positivity constraint. In this case sophisticated optimization tools are needed, such as interior-point methods.

%The local values of the regularization parameter %are then computed as the ratio between noise norm and a combination of local curvature and gradient values.
In this paper, we show that the UPEN principle may be a suitable criterion for choosing the regularization parameters in multiple-parameter Tikhonov regularization. % by analyzing the regularization properties of the corresponding solutions.
Motivated by the good results obtained  in 1D NMR reconstructions, this work extends the UPEN principle  to  two-dimensional
NMR data.% and defines an efficient spatially adapted regularization algorithm.
%to reconstruct images from multidimensional NMR relaxation data.
%The regularization parameter is defined by extending the UPP principle to 2D data.
This allows us to compute local values of the regularization parameters related to the curvature in each point of the distribution.
Therefore we obtain an iterative algorithm where, at each iteration, the locally adapted regularization parameters are
automatically updated and an approximate solution is obtained by solving a $L_2$ regularized least squares
problem  subject to lower-bound constraints.
The Newton Projection method \cite{ber82Siam,ber99} is used for the solution of the constrained subproblem.
Such a method proves to be extremely efficient to compute, at each iteration, very accurate solutions.

To the best of our knowledge there are no other papers addressing the extension of the UPEN principle to 2D data, except \cite{GR2012},
where the combination of Tikhonov and UPEN regularization are applied to NMR multidimensional data, without any constraint.
%and solving the minimization problems using \cite{VYH02}.
However the application of such a method to real data is quite complicated, due to  the large number of parameters to be set.

The main contribution of this work is twofold. Firstly, it analyzes the regularization properties of the solutions to Tikhonov problem with multiple parameters satisfying the UPEN principle. Secondly, this work extends the UPEN method to multidimensional data and defines an iterative procedure for the automatic computation of the regularized solution and the local regularization parameters.

%
%this kind of regularization function
%As a result of the success obtained in problems of image denoising and deblurring, in \cite{CMRA2013} is
%reported the application of $L_1$ sparsity preserving regularization to 1D Low-resolution nuclear magnetic resonance data
%with  positivity constraint. In this case sophisticated optimization tools are needed, such as interior-point methods.

The rest of the paper is organized as follows: in section \ref{PD} we describe the problem.
Section \ref{UP} presents the UPEN principle for the update of the
locally adapted regularization parameters.
The details about the algorithm are discussed in section \ref{Method}.
Numerical results obtained with  synthetic and real data are reported in section \ref{Num},
conclusions are given in section \ref{Concl}.

%#######################################################################################################
%NAME   :Prob_descr_F.tex
%VERSION:1.01 [04/01/2016] (vb)
%
%DATE  : 21/12/2015
%
%NOTE  :
%#######################################################################################################
%
\section{Problem Description \label{PD}}
NMR data are commonly represented by a signal
measured at different sampling points  which are often evolution times,
but they can be any variable parameter in an experiment, such as excitation frequency,
magnetic field, or field gradient strength.
We consider here 2D NMR relaxation data acquired using a conventional Inversion-Recovery (IR) experiment detected by a
Carr-Purcell-Meiboom-Gill (IR-CPMG) pulse train \cite{Blumich2005}.
The evolution time $t_1$ in IR  and the evolution time $t_2$ in CPMG are two independent variables
and the NMR relaxation data can be written as a two-dimensional array:
%\begin{linenomath}
\begin{equation}
  S(t_1,t_2)=\iint_0^\infty k_1(t_1,T_1)k_2(t_2,T_2)F(T_1,T_2) \ dT_1 \ dT_2 + e(t_1,t_2) .
\label{model}
\end{equation}
%\end{linenomath}
The model equation \eqref{model} is a first-kind Fredholm integral equation whose
kernel is represented by the product of the functions $k_1(t_1,T_1) = 1-2\exp(-t_1/T_1)$,
and $k_2(t_2,T_2)=\exp(-t_2/T_2)$.
The function $e(t_1,t_2)$ represents additive noise, commonly modeled by a Gaussian distribution.
Finally the unknown $F(T_1,T_2)$ is the distribution of  longitudinal and transverse relaxation times.
For all $T_1, T_2$, such distribution is known to be $F(T_1, T_2) \geq \rho$ where $\rho\in\mathbb{R}$.
In this work, we consider $\rho=0$ but we stress that, for some kind of sample, it could be $\rho\neq 0$ and
our algorithm can be easily extended to handle this case.
%
% Discretizzazione

Problem \eqref{model} is discretized by considering $M_1 \times M_2$ samples of the times $t_1$, $t_2$ and by organizing the discrete
observations $\bS \in \mathbb{R}^{M_1 \times M_2}$ in a vector $\mathbf{s}\in \mathbb{R}^{M}$, $M=M_1 \times M_2$.
The unknown discrete distribution $\mathbf{F} \in \mathbb{R}^{N_x \times N_y}$ is obtained by sampling $F$
at $N_x \times N_y$ relaxation times $T_1$ and $T_2$    and it is organized in a vector $\bf \in \mathbb{R}^{N}$, $N=N_x \times N_y$.
Problem \eqref{model} is discretized as:
%\begin{linenomath}
\begin{equation}
 \mathbf{K} \mathbf{f} + \be = \mathbf{s}
\label{eq:ls}
\end{equation}
%\end{linenomath}
where the matrix $\mathbf{K}$ is the Kronecker product
%\begin{linenomath}
\begin{equation}\label{eq:kronecker}
    \mathbf{K}=\bK_2\otimes \bK_1
\end{equation}
%\end{linenomath}
of the matrices $\bK_1 \in \mathbb{R}^{M_1 \times N_x}$
and $\bK_2 \in \mathbb{R}^{M_2 \times N_y}$  obtained
by discretization of the functions $k_1$ and $k_2$ in $M_1 \times N_x$ and $M_2 \times N_y$ points respectively.
The vector $\be\in \mathbb{R}^{M}$ is the discretization of the noise function $e(t_1,t_2)$.

\section{The Uniform Penalty Principle \label{UP}}
The linear system \eqref{eq:ls} is a well-known ill-conditioned inverse problem whose solution is extremely sensitive to the noise.
In order to recover meaningful approximations to the discrete distribution $\mathbf{f}$, some form of regularization is necessary. A commonly implemented regularization strategy is the Tikhonov method that replaces \eqref{eq:ls} by the minimization problem
\begin{equation}\label{eq:tikh0}
%\begin{split}
 \min_{\mathbf{f}} \left\lbrace \| \mathbf{K} \mathbf{f} - \mathbf{s} \|^2 +  \alpha \|\mathbf{L} \mathbf{f} \|^2 \right\rbrace
 %  & \text{ s.t.}  \; \mathbf{f} \geq 0
%\end{split}
\end{equation}
where $\| \cdot \|$ is the $L_2$ norm,  $\mathbf{L}\in\mathbb{R}^{N\times N}$ is the discrete Laplacian operator and $\alpha>0$ is the regularization parameter balancing data fidelity and solution smoothness.

The Tikhonov regularization \eqref{eq:tikh0} requires one to choose a suitable value of the regularization parameter $\alpha$. This is a crucial and difficult task since an universal method does not exist that gives the best value of the regularization parameter for any application \cite{bertero1998}. Assuming one has suitable bounds on the fidelity and regularization terms of the exact solution $\mathbf{f}^*$, i.e:
%\begin{linenomath}
\begin{equation}\label{miller1}
    \|\mathbf{K} \mathbf{f}^* - \mathbf{s} \|^2 = \varepsilon^2, \qquad \|\mathbf{L} \mathbf{f}^* \|^2 = E^2 ,
\end{equation}
Miller  \cite{miller1970} proposes to set
\begin{equation}\label{miller2}
\alpha = \frac{\varepsilon^2}{E^2}
\end{equation}
and shows that the solution $\mathbf{f}_\alpha$ of \eqref{eq:tikh0}, obtained with the value \eqref{miller2}, satisfies the following conditions
\begin{equation}\label{}
    \|\mathbf{K} \mathbf{f}_\alpha - \mathbf{s} \|^2 \leq \varepsilon^2, \qquad \|\mathbf{L} \mathbf{f}_\alpha \|^2 \leq E^2 .
\end{equation}
As a consequence, at the regularized solution $\mathbf{f}_\alpha$, we have
\begin{equation}\label{}
    \|\mathbf{K} \mathbf{f}_\alpha - \mathbf{s} \|^2 + \alpha\|\mathbf{L} \mathbf{f}_\alpha \|^2 \leq 2\varepsilon^2.
\end{equation}
%\end{linenomath}
%
When $\alpha$ is selected such that the fidelity and regularization terms are comparable, the bias is minimized and the result is stable in the presence of noise. However, Tikhonov regularization usually gives distorted solutions with undesired peaks even when $\alpha$ is optimally chosen. %The BRD method \cite{BRD_1981} has been proposed as an effective parameter selection rule in Tikhonov regularization of linear system deriving from the discretization of Fredholm integral equations of the first kind with a tensor product structure \cite{}. The BRD method proposes to choose $\alpha$ such that the residual norm at the optimal value of $\alpha$ equals its expected value.

In order to avoid unwanted peaks and, at the same time, recover the desired ones, multiple-parameter Tikhonov regularization can be used which replaces \eqref{eq:ls} by the minimization problem
\begin{equation}\label{eq:tikh}
    %\begin{split}
     \min_{\mathbf{f}}  \left\lbrace  \| \mathbf{K} \mathbf{f} - \mathbf{s} \|^2 +  \sum_{i=1}^N \lambda_i(\mathbf{L}\mathbf{f})_i^2  \right\rbrace\\
    %& \text{ s.t.}  \; \mathbf{f} \geq 0
    %\end{split}
\end{equation}
where $(\mathbf{L}\mathbf{f})_i$ is the $i$-th element of the vector $\mathbf{L}\mathbf{f}$. Now instead of a single regularization parameter $\alpha$, we have $N$ regularization parameters $\lambda_i$, one for each point of the distribution $\mathbf{f}$.
The UPEN method uses the following Uniform Penalty Principle to define the value of each regularization parameter $\lambda_i$.
%----------------------------------------------------------------------------------------------------------------------------
%
%
\begin{definition}[Uniform Penalty Principle] \label{eq:UPP2} Choose the regularization parameters $\lambda_i$
of multiple-parameter Tikhonov regularization \eqref{eq:tikh} such that, at a solution $\mathbf{f}$,
the terms $\lambda_i(\mathbf{L}\mathbf{f})_i^2$ are constant for all $ i=1,\ldots,N$ such that
    $(\mathbf{L}\mathbf{f})_i^2\neq0$, i.e:
\begin{equation}\label{eq:UPP2}
    \lambda_i(\mathbf{L}\mathbf{f})_i^2 = c, \quad \forall \; i=1,\ldots,N \quad \text{s.t.} \quad (\mathbf{L}\mathbf{f})_i^2\neq0
\end{equation}
where $c$ is a positive constant.
\end{definition}
Observe that, if the non-null terms $\lambda_i(\mathbf{L}\mathbf{f})_i^2$ have all the same constant value,
the regularization parameter $\lambda_i$ is inversely proportional to $(\mathbf{L}\mathbf{f})_i^2$,
so that the value $\lambda_i$ is smaller when $\mathbf{f}$ has fast changes and oscillations and $\lambda_i$ is larger in smooth and flat regions of $\mathbf{f}$. Hence, regularization is enforced in points where the distribution $\mathbf{f}$ is smooth.
%---------------------------------------------------------------------------------------------------------------------------
%Using a modified version of the Miller's proposal \eqref{miller2}, we choose
%\begin{equation}\label{C}
%    C=\frac{\varepsilon^2}{N}
%\end{equation}
%where $N_0$ is the number of non null terms $(\mathbf{L}\mathbf{f}^*)_i^2$.
%
The following lemmas prove the basic properties of the UPEN principle as a parameter selection rule.
\begin{lemma}
If $\mathbf{f}$ satisfies $\|\mathbf{K} \mathbf{f} - \mathbf{s} \|^2\leq\varepsilon^2$ and the UPEN principle holds with
%\begin{linenomath}
\begin{equation}\label{eq:C}
    c=\frac{\varepsilon^2}{N_0}
\end{equation}
%\end{linenomath}
where $N_0$ is the number of non null terms $(\mathbf{L}\mathbf{f})_i^2$, then
%\begin{linenomath}
\begin{equation}\label{eq:2epsilon}
    \|\mathbf{K} \mathbf{f} - \mathbf{s} \|^2  + \sum_{i=1}^N \lambda_i(\mathbf{L}\mathbf{f})_i^2 \leq 2\varepsilon^2.
\end{equation}
%\end{linenomath}
Conversely, any $\mathbf{f}$ satisfying \eqref{eq:2epsilon} and the UPEN principle with \eqref{eq:C}, also satisfies $\|\mathbf{K} \mathbf{f} - \mathbf{s} \|^2\leq\varepsilon^2$.
\end{lemma}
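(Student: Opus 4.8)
The plan is to exploit the fact that the UPEN principle collapses the entire penalty sum to a single fixed value, after which both directions follow by one-line arithmetic. First I would partition the index set $\{1,\dots,N\}$ into the indices for which $(\mathbf{L}\mathbf{f})_i^2=0$ and the $N_0$ indices for which $(\mathbf{L}\mathbf{f})_i^2\neq 0$. For the former, the product $\lambda_i(\mathbf{L}\mathbf{f})_i^2$ vanishes regardless of the associated $\lambda_i$, so these terms drop out of the sum; for each of the latter, the UPEN principle \eqref{eq:UPP2} forces $\lambda_i(\mathbf{L}\mathbf{f})_i^2=c$. Hence the whole penalty term reduces to $\sum_{i=1}^N \lambda_i(\mathbf{L}\mathbf{f})_i^2 = N_0\,c$.

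Substituting the prescribed value $c=\varepsilon^2/N_0$ from \eqref{eq:C}, this becomes $N_0\cdot(\varepsilon^2/N_0)=\varepsilon^2$. The key observation is therefore that, under the UPEN principle with the choice \eqref{eq:C}, the penalty term is not merely bounded but \emph{exactly equal} to $\varepsilon^2$, independently of the particular $\mathbf{f}$. This identity drives both implications.

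For the forward direction I would simply add the residual hypothesis $\|\mathbf{K}\mathbf{f}-\mathbf{s}\|^2\leq\varepsilon^2$ to the penalty identity to obtain
\[
\|\mathbf{K}\mathbf{f}-\mathbf{s}\|^2 + \sum_{i=1}^N \lambda_i(\mathbf{L}\mathbf{f})_i^2 \;\leq\; \varepsilon^2+\varepsilon^2 \;=\; 2\varepsilon^2,
\]
which is exactly \eqref{eq:2epsilon}. For the converse I would start from \eqref{eq:2epsilon}, again replace the penalty term by its exact value $\varepsilon^2$, and subtract, yielding $\|\mathbf{K}\mathbf{f}-\mathbf{s}\|^2\leq 2\varepsilon^2-\varepsilon^2=\varepsilon^2$.

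I do not expect any genuine obstacle: the argument is elementary once one recognizes that the penalty term is fully determined as $N_0 c$. The only points meriting a line of care are verifying that the zero-curvature indices really do drop out (so that the counting constant $N_0$ is the correct one in $N_0 c$) and checking that the calibration \eqref{eq:C} is precisely what normalizes $N_0 c$ to $\varepsilon^2$; both are immediate from the statement of the UPEN principle.
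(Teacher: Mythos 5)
Your proof is correct and follows essentially the same route as the paper's: both arguments reduce the penalty sum to the $N_0$ non-null terms, each equal to $c=\varepsilon^2/N_0$, so that the penalty equals exactly $\varepsilon^2$, and both directions then follow by adding or subtracting this quantity. Your write-up merely makes explicit (the exact-identity observation and the vanishing of the zero-curvature terms) what the paper leaves implicit in its displayed equalities.
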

\begin{proof}
Let $\mathbf{f}$ be such that $\|\mathbf{K} \mathbf{f} - \mathbf{s} \|^2\leq\varepsilon^2$, then, if the UPEN principle is satisfied with \eqref{eq:C},  we have
%\begin{linenomath}
\begin{equation}
    \|\mathbf{K} \mathbf{f} - \mathbf{s} \|^2  + \sum_{i=1}^N \lambda_i(\mathbf{L}\mathbf{f})_i^2 \leq
    \varepsilon^2  + \sum_{i=1}^{N_0} \frac{\varepsilon^2}{N_0} = 2\varepsilon^2.
\end{equation}
%\end{linenomath}
Conversely, if \eqref{eq:2epsilon} and the UPEN principle with \eqref{eq:C} hold, then
\begin{equation}
    2\varepsilon^2 \geq \|\mathbf{K} \mathbf{f} - \mathbf{s} \|^2  + \sum_{i=1}^N \lambda_i(\mathbf{L}\mathbf{f})_i^2 =
    \|\mathbf{K} \mathbf{f} - \mathbf{s} \|^2   + \sum_{i=1}^{N_0} \frac{\varepsilon^2}{N_0} = \|\mathbf{K} \mathbf{f} - \mathbf{s} \|^2+\varepsilon^2.
\end{equation}
\end{proof}
This result shows that the solution $f_{\boldsymbol\lambda}$ of problem \eqref{eq:tikh}, where each component $\lambda_i$ of $\boldsymbol\lambda$ is chosen according to the UPEN principle, is feasible with respect to the data-fidelity constraint $\|\mathbf{K} \mathbf{f} - \mathbf{s} \|^2\leq \varepsilon^2$.

The following lemma shows that $f_{\boldsymbol\lambda}$ is a regularized solution of \eqref{eq:ls}.
\begin{lemma}
Let us define the operator $R_{\boldsymbol\lambda}$ as
%\begin{linenomath}
\begin{equation}\label{}
    R_{\boldsymbol\lambda} = (\mathbf{K}^T\mathbf{K} +\mathbf{L}^T\mathbf{D}\mathbf{L})^{-1}\mathbf{K}^T
\end{equation}
%\end{linenomath}
where $\mathbf{D}$ is the diagonal matrix with diagonal elements
%\begin{linenomath}
\begin{equation}\label{}
    D_{i,i}=\left\{
             \begin{array}{ll}
               \lambda_i^2, & \hbox{if $(\mathbf{L}\mathbf{f})_i\neq0$;} \\
               \gamma\varepsilon^2, & \hbox{otherwise;}
             \end{array}
           \right.
\end{equation}
%\end{linenomath}
where $\gamma$ is a positive constant and the $\lambda_i$ are chosen according to the UPEN principle \eqref{eq:C}, then
%\begin{linenomath}
\begin{equation}\label{}
    \lim_{\varepsilon\rightarrow 0} R_{\boldsymbol\lambda}\mathbf{K}\mathbf{f}=\mathbf{f}.
\end{equation}
%\end{linenomath}
\end{lemma}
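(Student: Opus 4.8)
The plan is to show that the penalty matrix $\mathbf{L}^T\mathbf{D}\mathbf{L}$ collapses to zero as $\varepsilon\rightarrow0$, so that the operator $R_{\boldsymbol\lambda}$ degenerates into a left inverse of $\mathbf{K}$ acting on noise-free data. Writing out the action of $R_{\boldsymbol\lambda}$ on the exact data $\mathbf{K}\mathbf{f}$,
\begin{equation}
R_{\boldsymbol\lambda}\mathbf{K}\mathbf{f} = (\mathbf{K}^T\mathbf{K}+\mathbf{L}^T\mathbf{D}\mathbf{L})^{-1}\mathbf{K}^T\mathbf{K}\mathbf{f},
\end{equation}
I would use the algebraic identity $(\mathbf{A}+\mathbf{B})^{-1}\mathbf{A} = \mathbf{I}-(\mathbf{A}+\mathbf{B})^{-1}\mathbf{B}$ with $\mathbf{A}=\mathbf{K}^T\mathbf{K}$ and $\mathbf{B}=\mathbf{L}^T\mathbf{D}\mathbf{L}$ to rewrite this as
\begin{equation}
R_{\boldsymbol\lambda}\mathbf{K}\mathbf{f} = \mathbf{f}-(\mathbf{K}^T\mathbf{K}+\mathbf{L}^T\mathbf{D}\mathbf{L})^{-1}\mathbf{L}^T\mathbf{D}\mathbf{L}\,\mathbf{f}.
\end{equation}
It then suffices to prove that the correction term on the right vanishes in the limit.

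The crucial step is to verify that $\mathbf{D}\rightarrow\mathbf{0}$ as $\varepsilon\rightarrow0$. Here $\mathbf{f}$ is the fixed exact distribution, so the vector $\mathbf{L}\mathbf{f}$, the index set on which $(\mathbf{L}\mathbf{f})_i\neq0$, and the count $N_0$ of its non-null entries are all independent of $\varepsilon$. For the off-support indices the diagonal entry $\gamma\varepsilon^2$ tends to $0$ immediately. For the on-support indices the UPEN principle \eqref{eq:C} gives $\lambda_i = \varepsilon^2/(N_0(\mathbf{L}\mathbf{f})_i^2)$, so that $D_{i,i}=\lambda_i^2 = \varepsilon^4/(N_0^2(\mathbf{L}\mathbf{f})_i^4)\rightarrow0$ as well. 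Hence every diagonal entry of $\mathbf{D}$ tends to zero, which yields $\mathbf{L}^T\mathbf{D}\mathbf{L}\rightarrow\mathbf{0}$.

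To finish, I would invoke the continuity of matrix inversion. Provided $\mathbf{K}$ has full column rank, $\mathbf{K}^T\mathbf{K}$ is positive definite; then for every $\varepsilon>0$ the matrix $\mathbf{K}^T\mathbf{K}+\mathbf{L}^T\mathbf{D}\mathbf{L}$ is positive definite, so $R_{\boldsymbol\lambda}$ is well defined, and as $\varepsilon\rightarrow0$ it converges to $\mathbf{K}^T\mathbf{K}$. By continuity $(\mathbf{K}^T\mathbf{K}+\mathbf{L}^T\mathbf{D}\mathbf{L})^{-1}\rightarrow(\mathbf{K}^T\mathbf{K})^{-1}$, a bounded limit; multiplying this bounded factor by the vanishing factor $\mathbf{L}^T\mathbf{D}\mathbf{L}\,\mathbf{f}$ drives the correction term to $\mathbf{0}$, and therefore $R_{\boldsymbol\lambda}\mathbf{K}\mathbf{f}\rightarrow\mathbf{f}$.

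I expect the main obstacle to be the well-posedness and invertibility issues rather than the limit itself. Two points need care. First, the claim $\lambda_i^2\rightarrow0$ relies on treating $\mathbf{f}$ and $N_0$ as fixed data while only the noise level $\varepsilon$ shrinks, which must be stated explicitly. Second, the argument needs $\mathbf{K}^T\mathbf{K}$ to be nonsingular; since the discrete Laplacian $\mathbf{L}$ typically has a nontrivial null space, the penalty term alone cannot guarantee invertibility of the regularized normal operator, so the full-column-rank hypothesis on $\mathbf{K}$ is doing real work here. If $\mathbf{K}$ were rank deficient, one would instead recover the projection of $\mathbf{f}$ onto the orthogonal complement of $\ker\mathbf{K}$, and the statement would have to be weakened accordingly.
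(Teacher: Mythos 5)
Your proof is correct and takes essentially the same route as the paper's: the paper's entire argument is the observation that the UPEN choice gives $\lambda_i = \varepsilon^2/(N_0(\mathbf{L}\mathbf{f})_i^2)$, hence every diagonal entry $D_{i,i}\rightarrow 0$ as $\varepsilon\rightarrow 0$, after which the conclusion is declared to follow ``immediately.'' Your version simply supplies the details the paper leaves implicit --- the identity $(\mathbf{A}+\mathbf{B})^{-1}\mathbf{A}=\mathbf{I}-(\mathbf{A}+\mathbf{B})^{-1}\mathbf{B}$ and the continuity of matrix inversion --- and usefully surfaces the full-column-rank assumption on $\mathbf{K}$ (equivalently, invertibility of $\mathbf{K}^T\mathbf{K}$) that the paper's statement tacitly requires but never states.
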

\begin{proof}
We observe that, from \eqref{eq:UPP2} and \eqref{eq:C}, we obtain the following expression for the $\lambda_i$:
%\begin{linenomath}
\begin{equation}\label{eq:lambda}
    \lambda_i = \frac{\varepsilon^2}{N_0(\mathbf{L}\mathbf{f})_i^2} \quad \text{for all} \quad i=1,\ldots,N  \quad \text{such that}
    \quad (\mathbf{L}\mathbf{f})_i^2\neq0 .
\end{equation}
%\end{linenomath}
Hence, the proof immediately follows since $\lim_{\varepsilon\rightarrow 0} D_{i,i}=0$ for all $i$.
\end{proof}
Since the regularization parameters $\lambda_i$ defined in \eqref{eq:lambda} 
depend on $f_{\boldsymbol\lambda}$ and $\varepsilon$, which are unknown, 
we propose the following iterative scheme that, given an initial guess $\mathbf{f}^{(0)}$, computes both a solution to \eqref{eq:tikh} and suitable values of the regularization parameters $\lambda_i$, approximately satisfying the UPEN principle.

\smallskip
%\hrule
\paragraph{Iterative scheme.}\hfill\\
\vspace*{-.5cm}
  \begin{description}
     \item[Step 1] Compute $\lambda_i^{(k)} = \displaystyle{\frac{\|\mathbf{K} \mathbf{f}^{(k)} - \mathbf{s} \|^2}{N_0^{(k)}(\mathbf{L}\mathbf{f}^{(k)})_i^2}}$ where $N_0^{(k)}$ is the number of non null terms $(\mathbf{L}\mathbf{f}^{(k)})_i^2$ ;
     \item[Step 2] Compute $\mathbf{f}^{(k+1)}$ by solving \eqref{eq:tikh} with $\lambda_i=\lambda_i^{(k)}$;
     \item[Step 3] Set $k=k+1$.
  \end{description}
%\hrule
\smallskip
In this scheme, the $k$-th residual norm $\|\mathbf{K} \mathbf{f}^{(k)} - \mathbf{s} \|$ is used as an approximation of $\varepsilon$ that, in case of noisy data, is the noise norm $\|\mathbf{e}\|$.

%
%\begin{description}
%  \item[Step 1] Compute $\lambda_i^{(k)} = \displaystyle{\frac{\|\mathbf{K} \mathbf{f}^{(k)} - \mathbf{s} \|^2}{N_0(\mathbf{L}\mathbf{f}^{(k)})_i^2}}$
%  \item[Step 2] Compute $\mathbf{f}^{(k+1)}$ by solving \eqref{eq:tikh} with $\lambda_i=\lambda_i^{(k)}$; set $k=k+1$.
%\end{description}

Observe that, when one of the terms $(\mathbf{L}\mathbf{f}^{(k)})_i$ in Step 1 is negligible,
it is not possible or not meaningful to make $\lambda_i$
large enough to maintain a truly uniform penalty at such points.
Moreover, a term $(\mathbf{L}\mathbf{f}^{(k)})_i$ could be equal to zero in non flat regions due to noise and
approximation errors generated throughout the iterations. Therefore, in order to have more trustworthy information about the shape of the unknown distribution, it may be advisable to relax the strict uniform-penalty requirement by considering, in the selection rule for the parameters, both second order and first-order
derivative information in a neighborhood of the $i$-th point.
%Moreover, a wide numerical experimentation with NMR data \cite{} has suggested that, considering only second-order derivative information in the selection rule for $\lambda_i$ tends to give distributions extremely oscillating in flat regions and it may be advisable to relax the strict uniform-penalty requirement by introducing first-order derivative information. %in the selection rule for the parameters $\lambda_i$.
Let us define  the  ${N_x \times N_y}$ matrix $\mathbf{C}$ %with elements  $C_{\ell,\mu}= (\Delta \bF)_{\ell,\mu}$
so that lexicographically reordering its elements gives the vector $\mathbf{L}\mathbf{f}$. Moreover, denoted by $\mathbf{P}$ the  ${N_x \times N_y}$ matrix with elements $P_{\ell,\mu}= \|  \nabla \mathbf{F}_{\ell,\mu} \|$ and by $\mathbf{c}$ and $\mathbf{p}$ the $N$ vectors obtained by reordering the elements of $\mathbf{C}$ and $\mathbf{P}$, %\sqrt{(\nabla_{1}\mathbf{F})^2_{\ell,\mu}+(\nabla_{2}\mathbf{F})^2_{\ell,\mu}}$.
we propose to choose the regularization parameters $\lambda_i^{(k)}$ according to the following relaxed UPEN principle:
%\begin{linenomath}
\begin{equation}\label{eq:lambda2}
    \lambda_i^{(k)} = \frac{\|\mathbf{K} \mathbf{f}^{(k)} - \mathbf{s} \|^2}{N\left ( \beta_0 +\beta_p
     \underset{\substack{\mu \in I_i}}\max \, (\mathbf{p}^{(k)}_{\mu})^2
 + \beta_c \underset{\substack{\mu \in I_i }} \max \, (\mathbf{c}^{(k)}_{\mu})^2\right )},  \quad i=1,\ldots,N
%    \lambda_i = \frac{\|\mathbf{K} \mathbf{f} - \mathbf{s} \|^2}{N\left ( \beta_0 +\beta_p
%     \Phi\big( (P_{i,j})^2\big)
% + \beta_c \Phi\big( (C_{i,j})^2\big)\right )},  \quad i=1,\ldots,N
\end{equation}
%\end{linenomath}
%
where the $I_i$ are the indices subsets related to the neighborhood of the pixel $i$ and the $\beta$'s are positive parameters.
The parameter $\beta_0$ prevents division by zero and is a compliance floor,
which should be small enough to prevent undersmoothing, and large enough to avoid oversmoothing.
The optimum value of $\beta_0$, $\beta_c$ and $\beta_p$ can substantially change with the nature of the measured sample.
Therefore, their general optimum value can be only evaluated on the basis of statistical evaluation,
that will be the subject of future research.
The regularization parameters obtained by \eqref{eq:lambda2} are locally adapted: the selection of the values $\lambda_i$ is based on local information about the shape of the desired solution.

%#######################################################################################################
%NAME   :Method_G.tex
%VERSION:1.01 [31/12/2015] (vb)
%
%DATE  : 21/12/2015
%
%NOTE  :
%#######################################################################################################
%
\section{The Uniform Penalty Method \label{Method}}
%
%
%\bigskip
%\paragraph{Method description} We are now in a position to describing the proposed method.
%
%
In this section, we present an iterative procedure that, in absence of prior information about either the noise norm or the solution smoothness, automatically computes both
the locally adapted regularization parameters $\lambda_i$ and an approximation to
the unknown distribution of relaxation times $\bf$.
We will refer to the proposed method as 2DUPEN since it uses the relaxed UPEN principle to determine the values of regularization parameters. As observed in Section \ref{PD}, the distribution $\bf$ usually satisfies the physical bound $\bf\geq \rho$, $\rho\in\mathbb{R}$; in particular, we consider the usual case $\rho=0$. Hence, the modified Tikhonov problem  is
%\begin{linenomath}
\begin{equation}\label{eq:tikh2}
    \begin{split}
     & \min_{\mathbf{f}}  \left\lbrace  \| \mathbf{K} \mathbf{f} - \mathbf{s} \|^2 +  \sum_{i=1}^N \lambda_i(\mathbf{L}\mathbf{f})_i^2  \right\rbrace\\
    & \text{ s.t.}  \; \mathbf{f} \geq 0
    \end{split}
\end{equation}
%\end{linenomath}
%Moreover, UPEN automatically provides an estimate to the noise norm $\|\be\|$.

%The proposed method is outlined by the following scheme.
%%
%\smallskip
%%\hrule
%\paragraph{Algorithm 1: general scheme for the proposed method.}\hfill\\
%\vspace*{-.5cm}
%\begin{description}
%  \item \textbf{Initialization}: choose $\beta_0$, $\beta_c$ and $\beta_p$; choose $\bf^{(0)}$ and set $k=0$.
%  \item\textbf{repeat}
%  \begin{enumerate}
%     \item compute $\lambda_i^{(k)} = \displaystyle{
%\frac{\|\mathbf{K} \mathbf{f}^{(k)} - \mathbf{s} \|^2}{N\left ( \beta_0 +\beta_p
%\underset{\substack{\mu \in I_i}}\max \,( \mathbf{p}_{\mu}^{(k)})^2
% + \beta_c \underset{\substack{\mu \in I_i }} \max \, (\mathbf{c}_{\mu}^{(k)})^2\right )}
%     }$;
%     \item compute $\mathbf{f}^{(k+1)}$ by solving the constrained minimization problem \eqref{eq:tikh} with $\bLambda=\bLambda^{(k)}$;
%     \item set $k=k+1$;
%  \end{enumerate}
%  \hspace{-.7cm}\textbf{until} a stopping criterion is satisfied.
%\end{description}
%%\hrule
%\smallskip
%

The iterative scheme of Section \ref{UP} needs a suitable initial guess $\bf^{(0)}$ which should have a residual norm $\|\mathbf{K} \mathbf{f}^{(0)} - \mathbf{s} \|$ close to the noise norm $\|\mathbf{e}\|$. In \cite{Borgia1998, Borgia2000} this is obtained by means of statistical noise estimation procedures. Here we choose to exploit the regularization properties of the Gradient Projection (GP) method \cite{ber99,Cornelio2013} and define $\bf^{(0)}$ as an over-smoothed approximate solution of the nonnegatively constrained least squares problem
%\begin{linenomath}
\begin{equation}\label{nnls}
\begin{split}
   &\min\limits_{\mathbf{f}} \left\lbrace  \| \mathbf{K} \mathbf{f} - \mathbf{s} \|^2 \right\rbrace \\
   & \text{ s.t.}  \; \mathbf{f} \geq 0
   \end{split}
\end{equation}
%\end{linenomath}
obtained by applying a few iterations of GP.
%Then, the values $\lambda_i^{(0)}$ can be obtained by using formula \eqref{eq:lambda2} with $\bf=\bf^{(0)}$.

In order to turn the iterative scheme of Section  \ref{UP}  into a practical algorithm, we need to define a numerical strategy for the solution of \eqref{eq:tikh2}.
A wide numerical experimentation shows that, in order to preserve the relaxed UPEN principle,
high precision is needed in the numerical solution to \eqref{eq:tikh2}. With this aim, second-order methods are preferable to first-order ones,
due to their better convergence characteristics.
%First-order methods, as the GP method proposed to calculate $\bf^{(0)}$, are therefore not suited and second-order methods are preferable due to their better convergence characteristics.
Therefore, we consider the Newton Projection (NP) method \cite{ber82Siam,ber99} to solve the constrained minimization problem \eqref{eq:tikh2}. NP is a scaled gradient projection-like method where only the variables in the working set are scaled by the inverse of the corresponding submatrix of the Hessian.  Therefore, the computation of the search direction requires the solution of a linear system at each iteration. %(the working set is defined as the complement of the set of  null variables with positive gradient.)
Local superlinear convergence of the NP method can be proved \cite{ber82Siam}.
Exploiting the structure of the matrix $\mathbf{K}$, we solve the linear system of NP using the Conjugate Gradient (CG) method because the matrix-vector products can be performed efficiently.
In fact, the matrix $\mathbf{K}$ can be represented as a Kronecker product \eqref{eq:kronecker} and matrix-vector products can be performed without ever constructing $\mathbf{K}$
 by using the relation
%\begin{linenomath}
\begin{equation}\label{Kmat}
    \mathbf{K}\mathbf{x} = \texttt{vec}\big(\mathbf{K}_1\mathbf{X}\mathbf{K}_2^T \big),  \ \ \mathbf{x}=\texttt{vec}(\mathbf{X})
\end{equation}
%\end{linenomath}
where $\mathbf{X}\in\mathbb{R}^{N_x \times N_y}$ and, in general, $\texttt{vec}(\mathbf{V})$ is the vector obtained by columnwise reordering the elements of a matrix $\mathbf{V}$.
We refer to the corresponding inexact NP method as NPCG method and we propose to use it for the computation of $\bf^{(k)}$ (step 2).
\\
Let us now denote by $Q^{(k)}(\bf)$ the least squares objective function:
%\begin{linenomath}
\begin{equation}\label{Q}
    Q^{(k)}(\bf) = \| \mathbf{K} \mathbf{f} - \mathbf{s} \|^2 + \sum_{i=1}^N \lambda_i^{(k)}(\mathbf{L}\mathbf{f})_i^2
\end{equation}
%\end{linenomath}
and by $\mathcal{A}(\bf)$ the set of indices \cite{vog02}
%\begin{linenomath}
  \begin{equation*}
    \mathcal{A}(\bf) = \Big\{i \; |\; 0\leq f_i \leq \varepsilon \text{ and
    } (\nabla Q)_i>0\Big\}, \quad \varepsilon = \min \{ \overline{\varepsilon},\|\mathbf{f}-[\mathbf{f}- \nabla Q]^+\| \}
  \end{equation*}
%\end{linenomath}
where
$\overline{\varepsilon}$ is a small positive parameter and $[\cdot]^+$ denotes the projection on the positive orthant.
Finally, let $\mathbf{E}$ and $\mathbf{F}$ denote the diagonal matrices \cite{vog02}
such that
%\begin{linenomath}
  \begin{align*}
    \{ \mathbf{E}(\mathbf{f}) \}_{ii} &= \left\{
                         \begin{array}{ll}
                        1, & i \notin \mathcal{A}(\mathbf{f}); \\
                        0, & i \in \mathcal{A}(\mathbf{f});
                      \end{array}
                    \right. \\
  \mathbf{F}(\mathbf{f}) &= \mathbf{I}-\mathbf{E}(\mathbf{f})  .
  \end{align*}
%\end{linenomath}
The NPCG method for the minimization of $Q^{(k)}(\bf)$ under nonnegativity constraints can be stated formally as follows.
\smallskip
%\hrule
\paragraph{Algorithm 1: NPCG method. \label{PNCG}}\hfill\\
\vspace*{-.5cm}
\begin{itemize}
  \item[]  \textbf{Initialization}: choose $\bf^{(0)}$ and set $\ell=0$.
  \item[] \textbf{repeat}
  \begin{enumerate}
     \item compute the index subset $\mathcal{A}^{(\ell)}$ and the matrices $\mathbf{E}^{(\ell)}$ and $\mathbf{F}^{(\ell)}$;
     \item determine the search direction $\mathbf{d}^{(\ell)}$ by solving, with the CG method, the linear system
%     \begin{linenomath}
     \begin{equation}\label{cg_pn}
        \big(\mathbf{E}^{(\ell)}\nabla^2Q^{(k)}(\bf^{(\ell)})\mathbf{E}^{(\ell)}+\mathbf{F}^{(\ell)}\big)\mathbf{d}=-\nabla Q^{(k)}(\bf^{(\ell)});
     \end{equation}
 %    \end{linenomath}
     \item determine a step-length $\alpha^{(\ell)}$ satisfying the Armijo rule along the projection arc \cite{ber99};
     \item compute $\bf^{(\ell+1)}=[\bf^{(\ell)}+\alpha^{(\ell)}\mathbf{d}^{(\ell)}]^+$;
     \item set $\ell=\ell+1$
  \end{enumerate}
  \textbf{until} a stopping criterion is satisfied.
\end{itemize}
%\hrule
\smallskip

Summarizing, the 2DUPEN algorithm can be stated formally as follows.
\smallskip
%\hrule
\paragraph{Algorithm 2: 2DUPEN method. \label{UPEN}}\hfill\\
\vspace*{-.5cm}
\begin{itemize}
  \item[] \textbf{Initialization}: choose $\beta_0$, $\beta_c$ and $\beta_p$;
  compute an approximated solution $\mathbf{f}^{(0)}$ to the problem
%  \begin{linenomath}
   \begin{equation}
   \min\limits_{\mathbf{f}\geq 0} \,  \| \mathbf{K} \mathbf{f} - \mathbf{s} \|^2 
   \end{equation}
 % \end{linenomath}
   by applying a few iterations of the GP method;
   set $k=0$.
  \item[] \textbf{repeat}
  \begin{enumerate}
     \item compute $\lambda_i^{(k)} = \displaystyle{\frac{\|\mathbf{K} \mathbf{f}^{(k)} - \mathbf{s} \|^2}
     {N\left ( \beta_0 +\beta_p
\underset{\substack{\mu \in I_i}}\max \,( \mathbf{p}_{\mu}^{(k)})^2
 + \beta_c \underset{\substack{\mu \in I_i }} \max \, (\mathbf{c}_{\mu}^{(k)})^2\right )}}$;
%     {N\left ( \beta_0 +\beta_p
%     \underset{\substack{\mu \in I_i}}\max \, \mathbf{p}_{\mu}^2
% + \beta_c \underset{\substack{\mu \in I_i }} \max \, \mathbf{c}_{\mu}^2\right )}}$;
     \item calculate
     $\mathbf{f}^{(k+1)}$ by solving, with the NPCG method, the constrained minimization problem
%    \begin{linenomath}
     \begin{equation}\label{}
        \min\limits_{\mathbf{f}\geq 0} \, \left\{\| \mathbf{K} \mathbf{f} - \mathbf{s} \|_2^2 +  \sum_{i=1}^N \lambda_i^{(k)}(\mathbf{L}\mathbf{f})_i^2\right\};
     \end{equation}
%    \end{linenomath}

    \item set $k=k+1$;
  \end{enumerate}
  \textbf{until} a stopping criterion is satisfied.
\end{itemize}

%\hrule
%
We point out that Algorithm 2 can be easily modified to handle the case  $f \geq \rho$, $\rho  \neq 0$
or the case of no constraints. The former case uses the version of NP for bound constrained problems described in \cite{ber82Siam}
where the index subset $\mathcal{A}$ is
%\begin{linenomath}
  \begin{equation*}
    \mathcal{A}(\bf) = \Big\{i \; |\; \rho\leq f_i \leq \rho+\varepsilon \text{ and
    } (\nabla Q)_i>0\Big\} .
  \end{equation*}
%\end{linenomath}
In the case of no constraints, the CG algorithm is applied instead of the NP method.

%#######################################################################################################
%NAME   :NumRes.tex
%VERSION:1.01 [15/12/2015] (fz) & (gl) risultati con i nuovi dati; modifiche nel testo seguendo i commenti di V.
%        1.02 [04/01/2016] (vb)
%        1.03 [07/01/2016] (fz)
%
%DATE  : 21/12/2015
%
%NOTE  :
%#######################################################################################################
%
%
\section{Numerical Results \label{Num}}
In this section we report the results obtained by the 2DUPEN method defined by \textbf {Algorithm 2} with
simulated and real 2D NMR data.
The aim of the experiments is to have a first verification and validation of the proposed algorithm.
Let us firstly describe the experimental setting used in all our numerical experiments.

\subsection{Experimental setting}
The numerical experiments have been executed on a PC with Intel i7 processor (3.4GHz, 16GB RAM) using Matlab R2012a.

The 2DUPEN method has been compared with Tikhonov method where NP has been used to solve the constrained minimization problem:
%\begin{linenomath}
 \begin{equation}\label{eq:tikh3}
  \begin{split}
    \min_{\mathbf{f}} \left\lbrace \| \mathbf{K} \mathbf{f} - \mathbf{s} \|^2 +  \alpha \|\mathbf{L} \mathbf{f} \|^2 \right\rbrace
     & \text{ s.t.}  \; \mathbf{f} \geq 0 .
  \end{split}
 \end{equation}
%\end{linenomath}
The spatially adapted regularization parameters $\lambda_i^{(k)}$, used by the 2DUPEN method, are computed by \eqref{eq:lambda2} where
the indices of the sets $I_i$ are relative to a $3 \times 3$ mask centered at the $i-th$
point of coordinates $(j, k)$:
%\begin{linenomath}
$$\underset{\substack{\nu \in I_i}}\max \, \mathbf{p}_{\nu}^2
= \underset{\substack{j-1 \leq \ell \leq j+1 \\ k-1 \leq \mu \leq k+1}} \max (P^2_{\ell, \mu}), \ \ \
\underset{\substack{\nu \in I_i}}\max \, \mathbf{c}_{\nu}^2
= \underset{\substack{j-1 \leq \ell \leq j+1 \\ k-1 \leq \mu \leq k+1}}  \max (C^2_{\ell, \mu}) $$
%\end{linenomath}
and the matrices $\mathbf{P}$ and $\mathbf{C}$ are computed using
forward and central finite differences, respectively.
The initial approximate solution $\bf^{(0)}$ is computed by stopping the GP iterations as soon as:
%\begin{linenomath}
$$  \| \mathbf{r}^{k} \| -\| \mathbf{r}^{k-1} \| \leq \textsf{Tol}_\textsf{GP} \| \bs \|$$
%\end{linenomath}
where $\mathbf{r}^{k}$ is the residual vector at the $k$-th step and $\textsf{Tol}_\textsf{GP}$
is the relative tolerance parameter of GP method. The maximum number of iterations allowed for GP is
$\textsf{Kmax}_\textsf{GP}=50000$.
%or when a
%maximum of $\textsf{Kmax}_\textsf{GP}=50000$ iterations have been performed.

The 2DUPEN algorithm stops when the relative distance between two successive iterates becomes
less than a tolerance $\textsf{Tol}_\textsf{UPEN}$ or after a maximum of $\textsf{Kmax}_\textsf{UPEN}$ iterations.
After a wide experimentation, the values $\textsf{Tol}_\textsf{UPEN}=10^{-3}$, $\textsf{Kmax}_\textsf{UPEN} = 500$  have been fixed
and used in this set of preliminary experiments.

The iterations of the NP method used in both 2DUPEN and Tikhonov
methods have been stopped on the basis of the relative decrease of the objective function \eqref{Q}:
%\begin{linenomath}
$$\frac{Q^{(k)}(\bf)- Q^{(k-1)}(\bf)}{Q^{(k)}(\bf)} < \textsf{Tol}_\textsf{NP}$$
%\end{linenomath}
and the inner linear system \eqref{cg_pn} is solved by the CG method  with relative tolerance $\textsf{Tol}_\textsf{CG}$.
A maximum of $N_xN_y$ iterations have been allowed for both NP and CG.

\subsection{Experiments on Simulated Data}
In this paragraph we consider two test problems obtained by
inverting $M_1 \times M_2$ simulated IR-CPMG data ($\bs$)
artificially synthesized from two different model distributions
$\mathbf{f^*}$ of $Nx \times Ny$ relaxation times, to which noise is added.

In the first test problem (P1) the exact solution $\mathbf{f^*}$ has $Nx \times Ny = 64 \times 64$
relaxation times (see figure \ref{fig:F1}).
This test distribution is characterized by two well separated peaks over a zero flat area.

In the second test problem (P2) the exact solution $\mathbf{f^*}$
has $Nx \times Ny = 96 \times 96$ relaxation times (see figure \ref{fig:F1_p}).
This test distribution still presents separated peaks but there is also a quite large non-zero flat area.
This second test problem is
more similar to the usual experimental conditions.

The noisy data are defined as
%\begin{linenomath}
$\bs = \mathbf{y} + \be$ where $\mathbf{y}= \mathbf{K} \mathbf{f^*}$
%\end{linenomath}
represents
a $M_1 \times M_2$ noiseless signal with $M_1= M_2 = 128$.
In our experiments we define the noise vector $\be$ of level $\delta$ as
$ \be = \delta \boldsymbol \eta$ where $\delta > 0$
and $\boldsymbol \eta$ is a normal random Gaussian vector such that $\| \boldsymbol \eta \| = 1$.
By changing $\delta$  we evaluate the performance of the algorithm, computing the following error parameters:
%\begin{linenomath}
$$
\text{Err}=\frac{\| \mathbf{f}-\mathbf{f^*} \|}{\| \mathbf{f^*}  \|}, \ \ \ \hbox{Relative Error}
$$
$$
\text{Res} = \| \mathbf{K} \mathbf{f} - \mathbf{s} \|, \ \ \ \hbox{Residual  Norm}
$$
$$
\chi = \frac{\| \mathbf{f}-\mathbf{f^*} \| }{\sqrt{N}}, \  \ \ \hbox{Mean Squared Error}
$$
%\end{linenomath}
where $\mathbf{f}$ represents the computed distribution.
The values $\beta_p=\beta_c=1$ and $\beta_0=10^{-6}$  have been used in \eqref{eq:lambda2}.

The experiments consist in using  the 2DUPEN method to reconstruct the model distribution with noisy data where
$\| \be \| = 10^{-3}, 10^{-2}$, and  $10^{-1}$. The results are then compared to the best reconstruction obtained by
solving \eqref{eq:tikh3} using  the optimal scalar regularization parameter (Tikhonov method).
In this case the regularization parameter is obtained by a posteriori minimization  of the relative error.

We consider here the test problem P1. In table \ref{tab:T1} we report the error parameters ($\chi$, Err), the residual norm values (Res), the number of 2DUPEN iterations
(k\_upen) and the number of CG iterations (it\_cg).

Rows 2 and 3 ($\| \be \| = 10^{-2} $ and $10^{-1}$) in table \ref{tab:T1}  are obtained  setting  the parameters:
 $\textsf{Tol}_\textsf{GP}=10^{-2}$, $\textsf{Tol}_\textsf{NP}=10^{-6}$ and $\textsf{Tol}_\textsf{CG}=10^{-3}$.
The results in row 1 ($\| \be \| = 10^{-3}$) required smaller tolerance values:
$\textsf{Tol}_\textsf{GP}=10^{-3}$, $\textsf{Tol}_\textsf{NP}=10^{-8}$ and $\textsf{Tol}_\textsf{CG}=10^{-4}$.
%----- QUI -------
This caused an increase in the number of inner CG iterations.

In figure \ref{fig:F1} we show the 2DUPEN distribution and the
locally adapted regularization
matrix $\boldsymbol \Lambda$ containing the values of the regularization parameters $\lambda_i$ corresponding to the $T_1-T_2$ map of the 2DUPEN reconstruction, in the case $\| \be \|=10^{-2}$.
The behavior of the relative error in this case is plotted in figure \ref{fig:Errplot}(a).
We observe its fast decrease in the first steps and very small changes as the iterations proceed. A similar behavior can also be observed in the residual norms plotted in figure \ref{fig:Errplot}(b). By observing the Res values in table \ref{tab:T1}
we see that 2DUPEN method computes  a very good estimate of the norm of the noise vector ($|| \be || $). Hence we can conclude that 2DUPEN iterations improve the initial estimate of the noise norm obtaining very accurate results.
The number of CG iterations can be very large especially with low noise since smaller tolerances are required.
By observing the 2D surface of the locally
adapted regularization parameters (figure \ref{fig:F1}, $\log(\boldsymbol \Lambda)$)
we notice very large values, correspondent to  the flat regions and steeply decreasing values, related
to the  peaks of the map.
The locally adapted regularization parameters $\boldsymbol{\Lambda}$ computed by 2DUPEN
have values in the range $[3.82E-3, 2.64E+5]$.
%
%-----------------------------------------------------------------------------------------------------------------------------
\begin{table}[!htbp]
\centering
\begin{tabular}{ccccc}
\hline
 $|| \be || $ & $\chi$ & Err & Res & k\_upen(it\_cg) \\
\hline
$10^{-3}$ & 9.1183E-5 & 4.7626E-2 & 9.9827E-4 & 13(2775207) \\
$10^{-2}$ & 1.4617E-4 & 7.6344E-2 & 9.9881E-3 & 16(999905) \\
$10^{-1}$ & 1.9697E-4 & 1.0288E-1 & 9.9912E-2 & 8(114686) \\
\hline
\end{tabular}
\caption{Test problem P1: error parameters and iterations obtained by 2DUPEN with the P1 simulated data.}\label{tab:T1}
\end{table}
%
%--------------------------------------------------------------------------------------------------------------------------------
%
The same experiment is repeated using the scalar regularization parameter $\alpha$, as required by the Tikhonov method \eqref{eq:tikh3}.
Table \ref{tab:T2} reports the results obtained by the optimal regularization parameter, computed
by a posteriori minimization of the relative error.
\begin{table}[!h]
\centering
\begin{tabular}{ccccccc}
\hline
$|| \be \| $ & $\chi$ & Err & Res & it\_cg & $\alpha$ \\
\hline
$10^{-3}$ & 2.6693E-4 & 1.3942E-1 & 1.0125E-3 & 31689 & 6.0520E-6  \\
$10^{-2}$ & 2.7516E-4 & 1.4423E-1 & 9.9994E-2 & 19834 & 2.4822E-3  \\
$10^{-1}$ & 2.7613E-4 & 1.6391E-1 & 1.0889E-1 & 19711 & 2.4382E-1  \\
\hline
\end{tabular}
\caption{Test problem P1: error parameters and iterations obtained by Tikhonov reconstruction with optimal regularization parameter $\alpha$.}\label{tab:T2}
\end{table}
In the rest of figure \ref{fig:F1} , we report the maps of 2DUPEN and Tikhonov reconstructions in the case $\|\mathbf{e}\|=10^{-2}$.
The best Tikhonov reconstruction, shown in figure \ref{fig:F1} (bottom right), is obtained
with a constant regularization parameter $\alpha =2.48E-3$ which is smaller than  the smallest value of
 the 2DUPEN regularization parameter $\boldsymbol\Lambda$ ($3.82E-03$), allowing a good reconstruction of the peaks. However, in this case, also the zero flat areas are well reconstructed, as shown in the sum projection plots along the 1D $T_1$ and 1D $T_2$ distributions
 (figure \ref{fig:T1T2}). This proves that, in test problem P1, 2DUPEN and Tikhonov method coupled with an efficient method for automatically
 computing the regularization parameter, can produce comparable reconstructions.
% This test distribution is characterized by two well separated peaks over a zero flat area.
%--------------------------------------------------------------------------------------------------------------
\begin{figure}[!htbp]
 \begin{center}
 \hspace*{-.8mm}
 \begin{tabular}{cc}
   $\bf^*$ & $log(\Lambda)$ \\
   {\includegraphics[width=.45\textwidth]{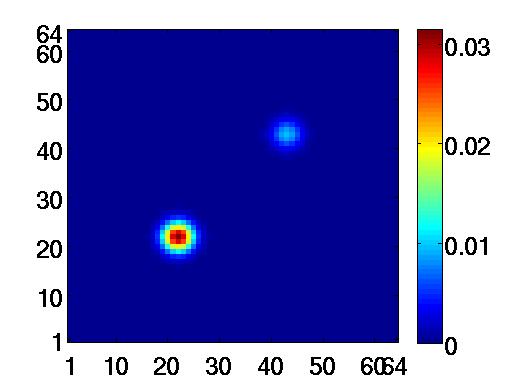}}  &
   {\includegraphics[width=.45\textwidth]{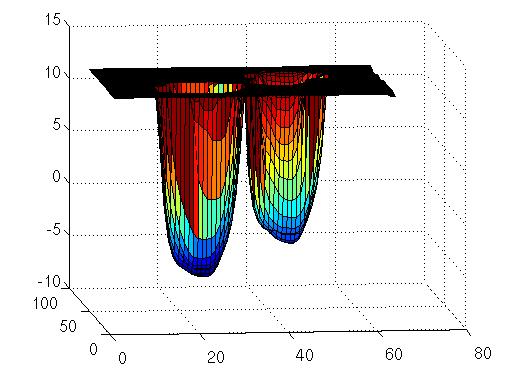}} \\
   {\textsf{UPEN}} & {\textsf{Tikhonov ($\alpha=2.482e-3$)}} \\
   {\includegraphics[width=.45\textwidth]{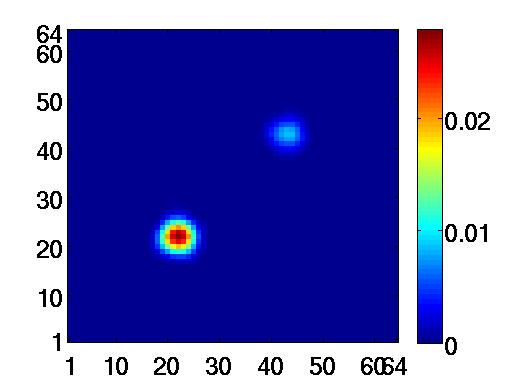}}  &
   {\includegraphics[width=.45\textwidth]{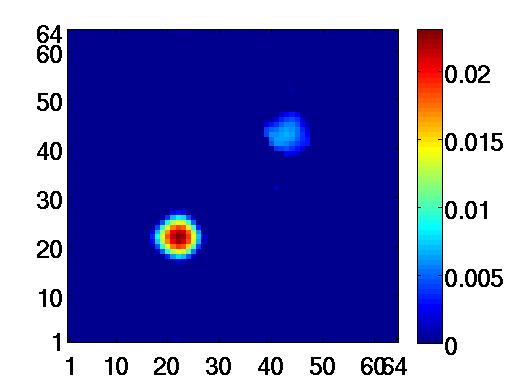}}\\
%      {\includegraphics[width=.45\textwidth]{./fig/fig_simul/True_img.jpg}}  &
%   {\includegraphics[width=.45\textwidth]{./fig/fig_simul/ck_log.jpg}} \\
%   {\textsf{UPEN}} & {\textsf{Tikhonov ($\alpha=2.482e-3$)}} \\
%   {\includegraphics[width=.45\textwidth]{./fig/fig_simul/x_upen.jpg}}  &
%   {\includegraphics[width=.45\textwidth]{./fig/fig_simul/x_Tikh.jpg}}\\

 \end{tabular}
 \caption{Test problem P1. Top row from left to right: $T_1-T_2$ map of the original model distribution (exact solution $\bf^*$),
 2DUPEN regularization matrix in log scale ($log(\boldsymbol{\Lambda})$).
 Bottom row: $T_1-T_2$ maps reconstructed by 2DUPEN and Tikhonov methods, respectively.}\label{fig:F1}
\end{center}
\end{figure}
%---------------------------------------------------------------------------------------------------------
\begin{figure}[!htbp]
 \begin{center}
 \hspace*{-.8mm}
 \begin{tabular}{cc}
   $\bf^*$ & $log(\Lambda)$ \\
   {\includegraphics[width=.45\textwidth]{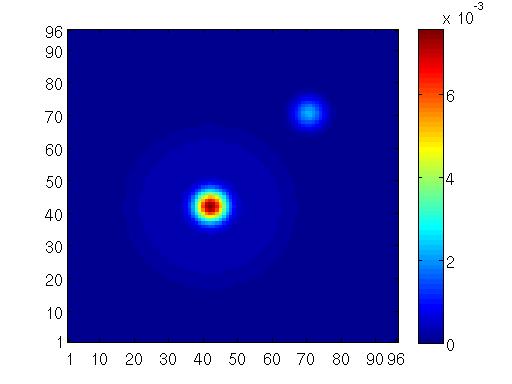}}  &
   {\includegraphics[width=.45\textwidth]{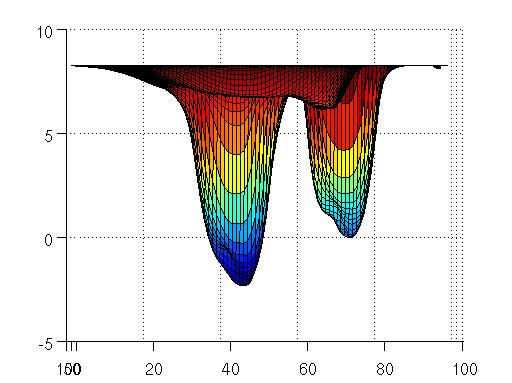}} \\
   {\textsf{UPEN}} & {\textsf{Tikhonov ($\alpha=4.125e-2$)}} \\
   {\includegraphics[width=.45\textwidth]{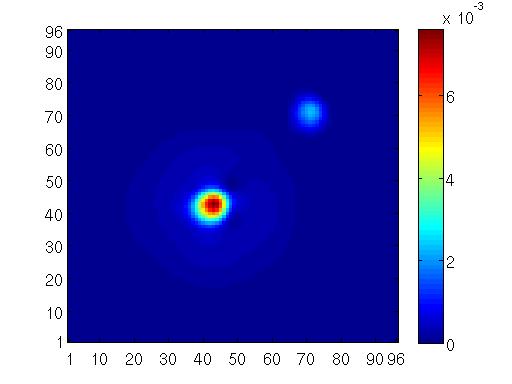}}  &
   {\includegraphics[width=.45\textwidth]{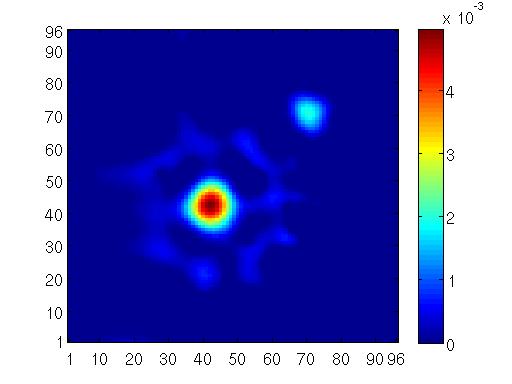}}\\
 \end{tabular}
 \caption{Test problem P2. Top row from left to right: $T_1-T_2$ map of the original model distribution
 (exact solution $\bf^*$),
 2DUPEN regularization matrix in log scale ($log(\boldsymbol{\Lambda})$).
 Bottom row: $T_1-T_2$ maps reconstructed by 2DUPEN and Tikhonov method, respectively.}\label{fig:F1_p}
\end{center}
\end{figure}
%-------------------------------------------------------------------------------------------------------------------
% \begin{figure}[h]
%  \centering
%  \includegraphics[width=6cm,height=5cm]{./fig/True_img.jpg}
%  % True_img.jpg: 512x384 pixel, 72dpi, 18.06x13.55 cm, bb=0 0 512 384
%  \caption{Model distribution $\mathbf{f}^*$.}
%  \label{fig:true_f}
% \end{figure}
\begin{figure}[!htbp]
 \centering
 \begin{tabular}{cc}
 (a) & (b) \\
 \includegraphics[width=6cm,height=5cm,bb=0 0 512 384]{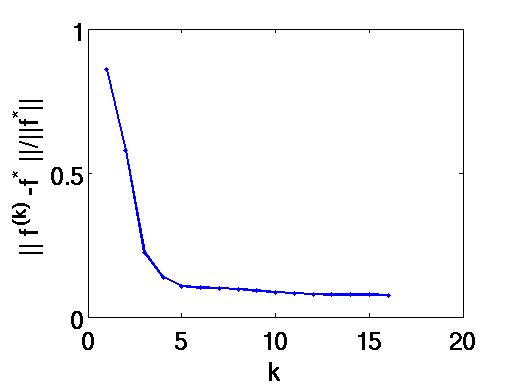} &
 \includegraphics[width=6cm,height=5cm,bb=0 0 512 384]{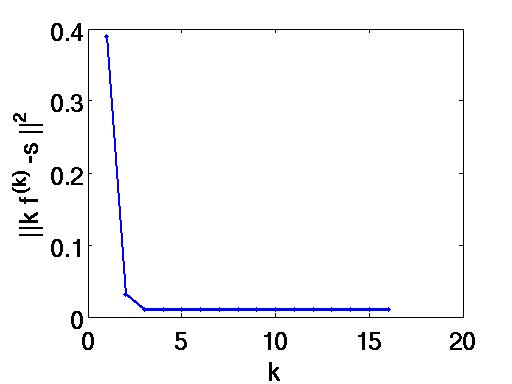} \\
 \end{tabular}
 % Err_plot.jpg: 512x384 pixel, 72dpi, 18.06x13.55 cm, bb=0 0 512 384
 \caption{Test problem P1: Relative Error (Err) and Residual norm (Res) per iteration ($\| \be \|=1.E-2$).}
 \label{fig:Errplot}
\end{figure}
%-------------------------------------------------------------------------------------------------------------------
\begin{figure}[!htbp]
 \begin{center}
 \hspace*{-.8mm}
 \begin{tabular}{cc}
 %  {\textsf{T1}} & {\textsf{T2}} \\
   {\includegraphics[width=.45\textwidth]{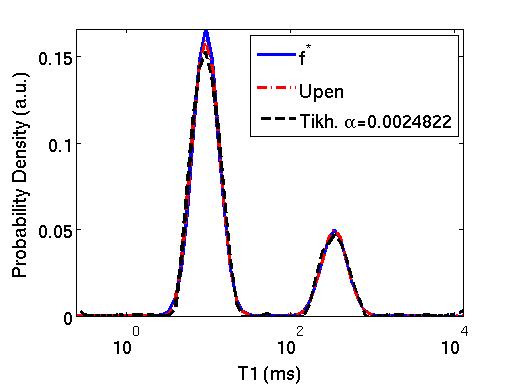}}  &
   {\includegraphics[width=.45\textwidth]{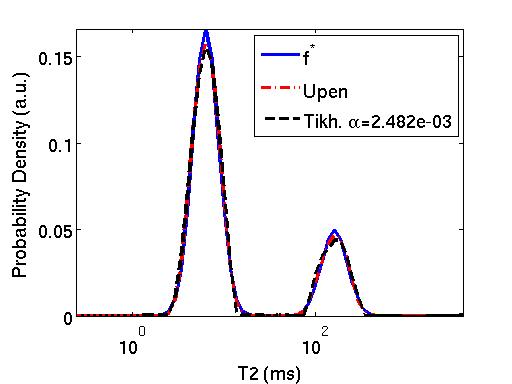}} \\
 \end{tabular}
 \caption{Test problem P1. Sum projection plots along the $T_1$ and $T_2$ dimensions, of the exact solution $\bf^*$ (blue line),
 2DUPEN (red dash-dot line) and Tikhonov (black dash line ) reconstructions, respectively ($\| \be \|=1.E-2$).}\label{fig:T1T2}
\end{center}
\end{figure}
%--------------------------------------------------------------------------------------------------------------------
%The second test problem consists of a model distribution relative to  $96 \times 96$ relaxation times
%$\mathbf{f^*}$ reported in figure \ref{fig:F1_p}.
The second test problem P2 is considered hereafter. Analogously to the previous case, the error parameters obtained with 2DUPEN are reported in table \ref{tab:T1_p}.
Also in this case we observe that the residual norm value obtained in column Res is a good estimate of the noise norm $\| \mathbf{e} \|$.

The 2D surface of the spatially adapted regularization parameters (figure \ref{fig:F1_p}, $\log(\boldsymbol \Lambda)$)
has the largest values in correspondence to the zero flat regions, mildly deceasing values in flat non zero areas
and steeply decreasing values in correspondence to the image peaks. This produces a good reconstruction of the
different features.
%---------------------------------------------------------------------------------------------------------------------
%-----------------------------------------------------------------------------------------------------------------------------
\begin{table}[!h]
\centering
\begin{tabular}{ccccc}
\hline
 $|| \be || $ & $\chi$ & Err & Res & k\_upen(it\_cg) \\
\hline
$10^{-3}$ & 7.4027E-5 & 1.5174E-1 & 9.9780E-4 & 16(4012669) \\
$10^{-2}$ & 6.5259E-5 & 1.3377E-1 & 9.9893E-3 & 11(450895) \\
$10^{-1}$ & 8.1657E-5 & 1.6739E-1 & 9.9913E-2 & 38(566106)  \\
\hline
\end{tabular}
\caption{Test problem P2: error parameters and iterations obtained by 2DUPEN with simulated data.}\label{tab:T1_p}
\end{table}
%--------------------------------------------------------------------------------------------------------------------------------
The same experiment is repeated using the scalar regularization parameter $\alpha$ as required by   the Tikhonov method defined in
\eqref{eq:tikh3}.
Table \ref{tab:T2_p} reports the results obtained by the optimal regularization parameter, computed
by a posteriori minimization of the relative error.
\begin{table}[!h]
\centering
\begin{tabular}{ccccccc}
\hline
$|| \be \| $ & $\chi$ & Err & Res & $\alpha$ & it\_cg  \\
\hline
$10^{-3}$ &  1.3963E-4 & 2.8622E-1 & 3.1594E-2 & 1.1938E-1 & 55182 \\
$10^{-2}$ &  1.5091E-4 & 3.0935E-1 & 9.9943E-2 & 4.1246E-1 & 33030 \\
$10^{-1}$ &  1.7304E-4 & 3.2989E-1 & 9.9998E-2 & 2.7567E-2 & 43410 \\
\hline
\end{tabular}
\caption{Test Problem P2: error parameters and iterations obtained by Tikhonov reconstruction with optimal regularization parameter $\alpha$.}\label{tab:T2_p}
\end{table}
The best Tikhonov reconstruction, shown in figure \ref{fig:F1_p} (bottom right), is obtained
 with a constant regularization parameter $\alpha =4.1246E-1$ which is larger than the smallest
 component of $\boldsymbol\Lambda$ ($\lambda_{min}=2.7567E-02$) causing an underestimate of the highest peak.

On the other hand reconstructing with $\alpha =2.7567E-02$ we obtain an improvement in the reconstruction of the peak
but also a larger relative error Err$=3.2989E-1$ due to the increased oscillations in the flat areas,
as shown in figures \ref{fig:T1T2_p} and \ref{fig:T1T2_p_1} relative to the sum projection plots
along the 1D $T_1$ and 1D $T_2$ distributions.
%------------------------------------------------------------------------------------------------------------------
%-------------------------------------------------------------------------------------------------------------------
 \begin{figure}[!h]
  \begin{center}
%  \hspace*{-.8mm}
%  %\tabcolsep -10pt
  \begin{tabular}{cc}
%    {\textsf{T1}} & {\textsf{T2}} \\
    {\includegraphics[width=.45\textwidth]{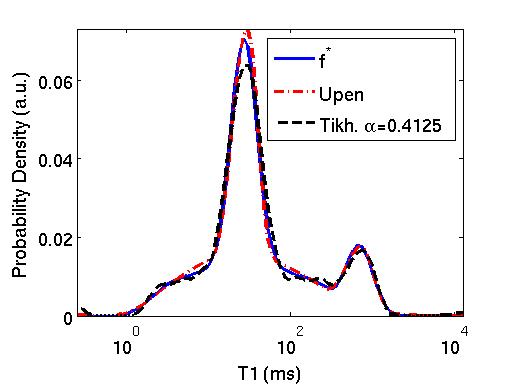}}  &
    {\includegraphics[width=.45\textwidth]{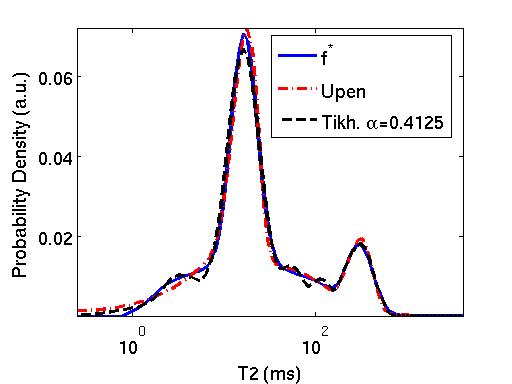}} \\
  \end{tabular}
  \caption{Test problem P2. Projections along $T_1$, $T_2$ of exact solution $\bf^*$ (blue line),
  2DUPEN (red dash-dot line) and Tikhonov  (black dash line ) ($\| \be \|=1.E-2$).}
  \label{fig:T1T2_p}
 \end{center}
 \end{figure}
% %
\begin{figure}[!h]
  \begin{center}
%  \hspace*{-.8mm}
%  %\tabcolsep -10pt
  \begin{tabular}{cc}
 %   {\textsf{T1}} & {\textsf{T2}} \\
    {\includegraphics[width=.45\textwidth]{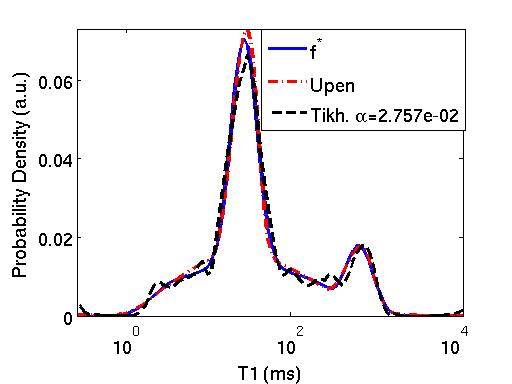}}  &
    {\includegraphics[width=.45\textwidth]{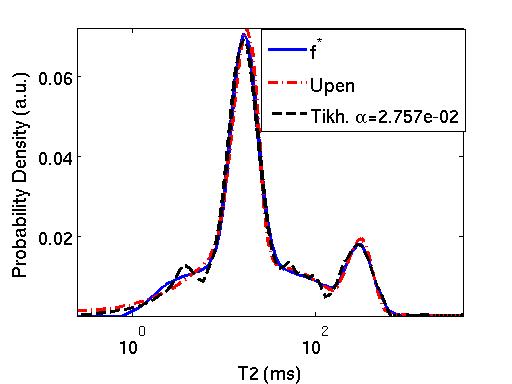}} \\
  \end{tabular}
  \caption{Test problem P2. Projections along $T_1$, $T_2$ of exact solution $\bf^*$ (blue line),
  2DUPEN (red dash-dot line) and Tikhonov (black dash line ) ($\| \be \|=1.E-2$).} \label{fig:T1T2_p_1}
 \end{center}
 \end{figure}
%
% \begin{figure}[h]
%  \centering
%  \includegraphics[width=6cm,height=5cm,bb=0 0 512 384]{./fig/Fig_p/}
%  \caption{Relative Error per iteration ($\| \be \|=10^{-2}$)}
%  \label{fig:Errplot_p}
% \end{figure}

In this case the reconstructions obtained by 2DUPEN and Tikhonov are never comparable, since Tikhonov method
is not able to approximate correctly both the high peaks and the non-zero flat regions.
It appears that 2DUPEN can better reproduce the underlying exact distribution expecially in presence of peaks and non nonzero flat regions.
More studies on this will be performed.
This tests verified the correctness of the implemented algorithm as well as the effectiveness
of the 2DUPEN principle for the selection of the locally adapted regularization parameters.

\subsection{Experiments on Real Data}
We now present the results obtained on real NMR data. A sample was prepared by
filling a 10 mm external diameter glass NMR tube with 6 mm of egg yolk.
The tube was sealed with Parafilm, and then at once measured.
NMR measurements were performed at 25 \textcelsius \;
by a homebuilt relaxometer based on a PC-NMR portable NMR console (Stelar, Mede, Italy) and a 0.47 T Joel electromagnet.

All relaxation experimental curves were acquired using phase-cycling procedures.
The $\pi/2$ pulse width was of 3.8 $\mu s$ and the relaxation delay (RD) was
set to a value greater than 4 times the maximum $T_1$ of the sample. In all experiments RD was equal to 3.5 s.

For the 2D measurements, longitudinal-transverse relaxation curve ($T_1$-$T_2$) was acquired by an
IR-CPMG pulse sequence
(RD - $\pi_x$ - $T_I$ - $(\pi/2)_x$ - TE/2-[$\pi_y$-TE/2-echo acquisition-TE/2]$_\text{NE}$).
The $T_1$ relaxation signal was acquired with 128 inversion times ($T_I$) chosen in geometrical progression
from 1 ms up to 2.8 s, with $NE=1024$ (number of acquired echos, echo times $TE= 500 \mu s$) on each CPMG,
and number of scans equal to 4.
All curves were acquired using phase-cycling procedures.

As CPMG data blocks of an IR-CPMG sequence can have thousands of points,
to avoid excessive computation time it may be necessary to reduce the number of points of each CPMG data blocks.
If the noise is additive, random, and, approximately, normally distributed, and if systematic
data errors are smoothly varying with time, then averaging data points into sufficiently
narrow windows does not change the result with respect to that obtained by using all points.
In some cases, the windowed number of points for computation can be reduced by orders of magnitude.
%If the data are windowed, it is necessary to introduce in the objective function a weighting factor (the number of points averaged in each window).
In this work, the windowing was implemented following the method described in \cite{Borgia2000}.
After the application of the windowing, the points of CPMG blocks were non-equally spaced and reduced to a number of 146.

For the UPEN inversion, in order to respect approximatively the same ratio existing between $M_1$ and $M_2$,
the values for $N_x=64$ and $N_y=73$
were chosen and the values $\textsf{Tol}_\textsf{GP}=0.01$,
$\textsf{Tol}_\textsf{NP}=10^{-4}$ and $\textsf{Tol}_\textsf{CG}=0.1$ were fixed.

For this test problem with real data an exact solution is not available but from several experimental studies
we know that the 2DUPEN method with $ \beta_p=5\cdot10^{-2}$, $\beta_c=2\cdot10^{-2} $ and $\beta_0 = 5\cdot10^{-7}$
 correctly computes the position and value of
the two peaks and that there is a smooth bending  between them.

The $T_1-T_2$ maps obtained from the 2DUPEN inversion is shown in figure \ref{fig:2r} (top row);
figure \ref{fig:2r} also shows the $T_1-T_2$ maps obtained by Tikhonov with
$\alpha=0.1$ (middle line) and $\alpha=20$ (bottom line).
Figure \ref{fig:3r} shows, in log-scale, the regularization matrix
$\boldsymbol{\Lambda}$.
%
%-------------------------------------------------------------------------------------------
\begin{figure}[!htbp]
 \begin{center}
 \hspace*{-.8mm}
 \begin{tabular}{cc}
   \multicolumn{2}{c}{\textsf{2DUPEN}}\\
   {\includegraphics[width=.45\textwidth]{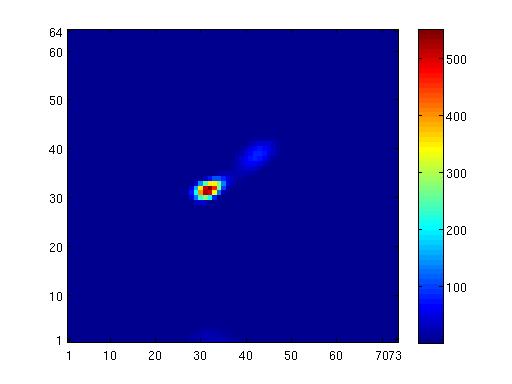}}  &
   {\includegraphics[width=.45\textwidth]{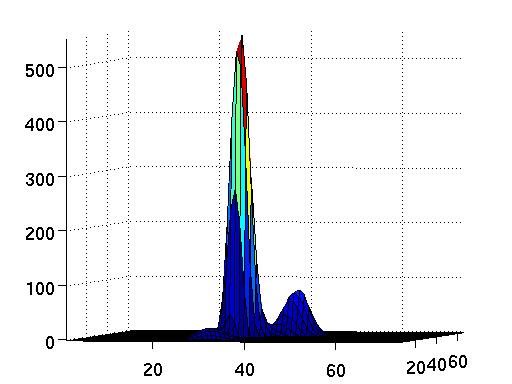}} \\
   \multicolumn{2}{c}{\textsf{Tikhonov ($\alpha=0.1$)}}\\
   {\includegraphics[width=.45\textwidth]{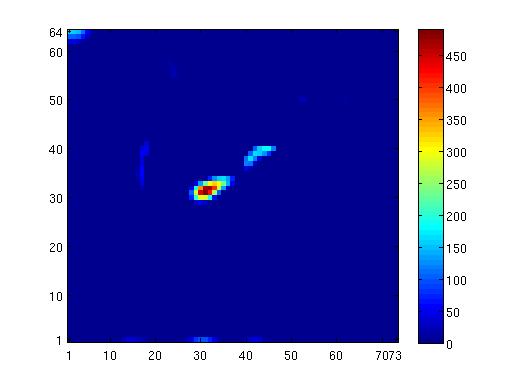}}  &
   {\includegraphics[width=.45\textwidth]{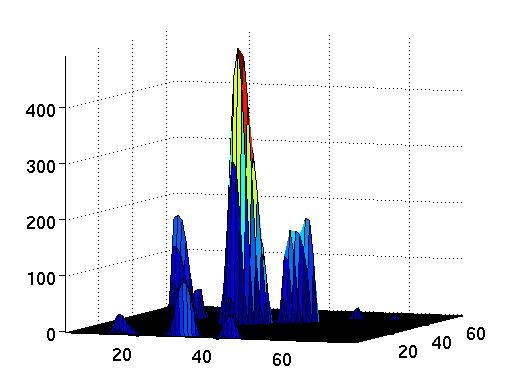}} \\
   \multicolumn{2}{c}{\textsf{Tikhonov ($\alpha=20$)}}\\
   {\includegraphics[width=.45\textwidth]{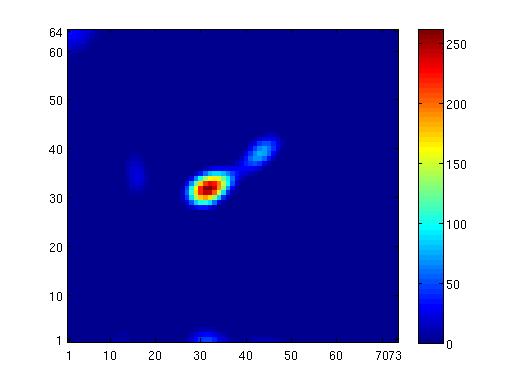}}  &
   {\includegraphics[width=.45\textwidth]{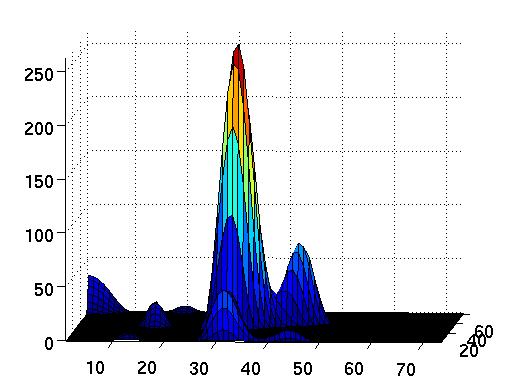}}
 \end{tabular}
 \caption{$T_1-T_2$ maps (left) and 3D distributions (right),
 obtained by the 2DUPEN (top line) and Tikhonov (middle and bottom lines) methods.}\label{fig:2r}
\end{center}
\end{figure}
\begin{figure}[!htbp]
 \begin{center}
  \begin{tabular}{cc}
   \includegraphics[width=.45\textwidth]{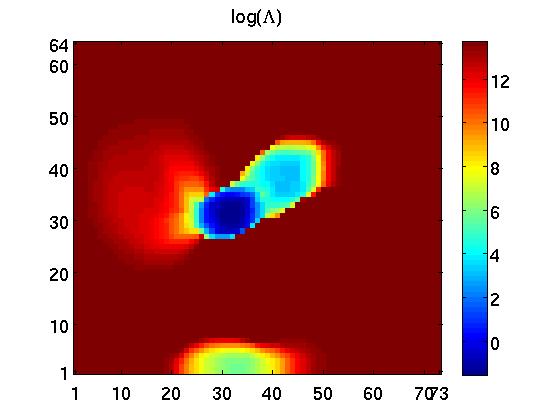}
     \includegraphics[width=.45\textwidth]{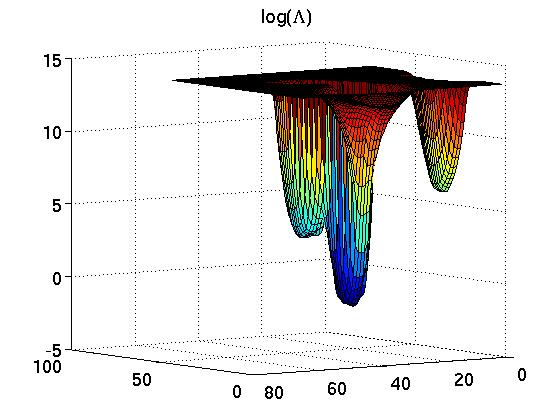}
      \end{tabular}
 \caption{Regularization matrix $\log(\boldsymbol{\Lambda})$ of the 2DUPEN method.}\label{fig:3r}
\end{center}
\end{figure}
\begin{figure}[!htbp]
 \begin{center}
 \hspace*{-.8mm}
 \begin{tabular}{cc}
   {\includegraphics[width=.45\textwidth]{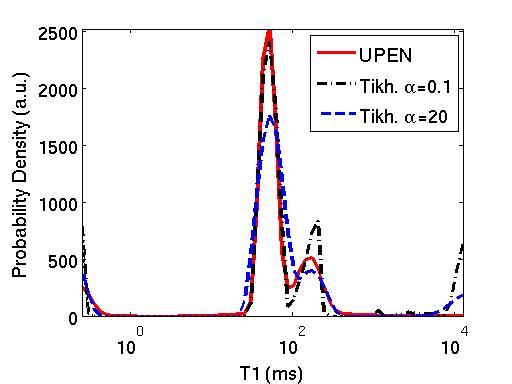}}  &
   {\includegraphics[width=.45\textwidth]{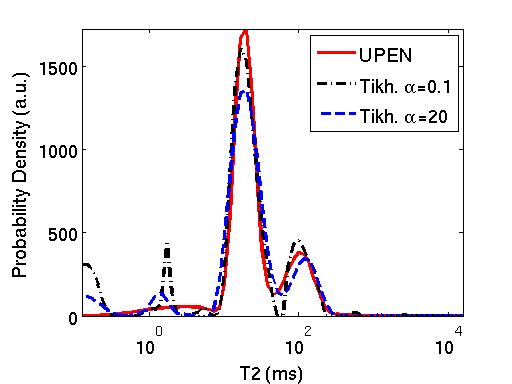}}
 \end{tabular}
 \caption{Sum projections along the $T_1$ and $T_2$ dimensions of the 2DUPEN (red line) and Tikhonov (blue dash and black dash dotted line)
reconstructions.}\label{fig:4r}
\end{center}
\end{figure}
%------------------------------------------------------------
\begin{figure}[!htbp]
 \begin{center}
 \hspace*{-.8mm}
 \begin{tabular}{cc}
   \multicolumn{2}{c}{\textsf{2DUPEN}}\\
   \multicolumn{2}{c}{\includegraphics[width=.45\textwidth]{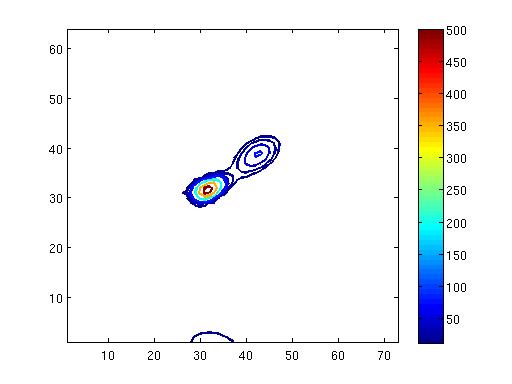}}  \\
   {\textsf{Tikhonov ($\alpha=0.1$)}} & {\textsf{Tikhonov ($\alpha=20$)}} \\
   {\includegraphics[width=.45\textwidth]{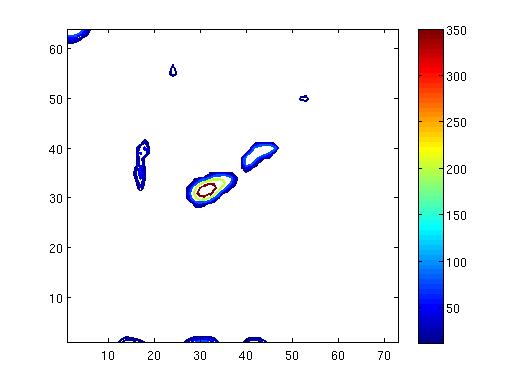}}  &
   {\includegraphics[width=.45\textwidth]{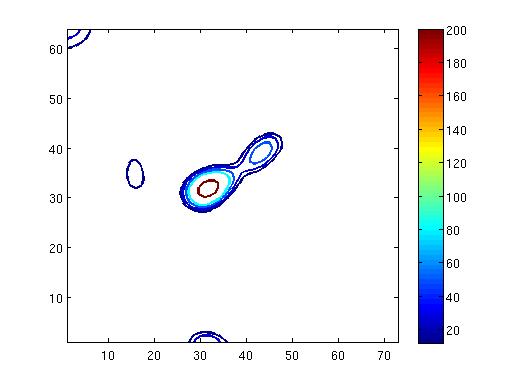}}\\
 \end{tabular}
 \caption{Contour plot of the 2DUPEN (top line) and Tikhonov (bottom line) reconstructions.}\label{fig:5r}
\end{center}
\end{figure}
The comparison with Tikhonov regularization shows that no values of the regularization parameter $\alpha$
allows us to
reconstruct all the features of the 2DUPEN  reconstruction.
Although a quite correct location of the highest peak can be obtained for $\alpha \in [0.01, 50]$
its height and the values in flat regions are  not always well reconstructed.

As expected, different values of the Tikhonov regularization parameter $\alpha$ give more accurate
reconstruction of different features.
For axample, we see that
 $\alpha = 0.1$  gives a better reconstruction of the highest peak both in  $T_1$
 and $T_2$ projections (figure \ref{fig:4r})
 but spurious oscillations appear in the flat regions and the separation between the peaks increases.
 On the other hand if $\alpha=20$ we obtain a better approximation of
 the lower peak  in both $T_1$ and $T_2$ projections, but the highest peak is excessively damped.
 Comparing the contour maps of the 2DUPEN and Tikhonov $T_1-T_2$ maps (figure \ref{fig:5r})
we observe  the presence of spurious structures in flat areas and the excessive widening  of the highest peak
as $\alpha$ increases.

 It is evident that for small values of $\alpha$, Tikhonov regularization, using a constant regularization parameter,
 gives a high peak with value comparable with the value given by 2DUPEN
 but it tends to break the wide peak or tail into two separate peaks.
 Otherwise, for larger values of $\alpha$,
 Tikhonov regularization tends to excessively broaden the sharp peak.
 On the contrary, 2DUPEN, using variable smoothing,
 is able to recover both the sharp peak and the tail improving the possibility
 of performing reliable quantitative analysis based on the 2D distribution.

%#######################################################################################################
%NAME   :Concl.tex
%VERSION:1.01 [31/12/2015] (vb)
%
%DATE  : 21/12/2015
%
%NOTE  :
%#######################################################################################################
%
\section{Conclusions \label{Concl}}
In this paper, the 2DUPEN method has been presented for the inversion of two-dimensional NMR relaxation data.
2DUPEN automatically computes a distribution of relaxation times and spatially adapted regularization
parameters by iteratively solving a sequence of nonnegatively constrained least squares problems
and by updating the regularization parameters according to the Uniform Penalty principle.
Results of numerical experiments on real and simulated NMR data show that 2DUPEN is effective
and outperforms Tikhonov method with constant regularization parameter.
Future work will be related to the study of optimal parameter settings for  different applications
of multidimensional NMR data reconstruction.

%\textcolor[rgb]{1.00,0.00,0.00}{A statistical analysis...will be } 
\section*{Acknowledgement}
Investigation supported by University of Bologna (FARB Fund).
The authors would like to thank  Leonardo Brizi and Manuel Mariani for the NMR data acquisition.
\bibliography{biblio_F}
\bibliographystyle{unsrt}

\end{document}